\newcounter{quotethmcnt}
\def\equationautorefname~#1\null{(#1)}
\def\itemautorefname~#1\null{#1}
\newcommand{\mynewthm}[3][]{%
  \newaliascnt{#2}{thmnum}%
  \newtheorem{#2}[#2]{#3}%
  \aliascntresetthe{#2}%
  \newtheorem*{#2*}{#3}%
  \expandafter\newcommand\csname #2autorefname\endcsname{#3}%
  \expandafter\renewcommand\csname the#2\endcsname{\thethmnum}%
}
\newtheorem*{clm}{Claim}
\newenvironment{clmprf}{%
  \begin{proof}[Proof of claim]%
  }{\end{proof}}
\let\xxx=\frametitle
\def\frametitle#1{%
  \xxx{%
    \setbeamercolor*{math text}{use={titlelike,my math text},fg=titlelike.fg!80!my math text.fg}%
    #1}%
  \setbeamercolor{math text}{use=my math text,fg=my math text.fg}%
}
\newcommand{\beamerenv}[3]{%
\newenvironment<>{#1}%
{%
  \setbeamercolor{temp}{fg=structure.fg}%
  \setbeamercolor{structure}{fg=#2}%
  \setbeamercolor{block body}{use=structure,bg=structure.fg!5!white}%
  \begin{#3}%
}%
{\end{#3}\setbeamercolor{structure}{fg=temp.fg}}}
\newcommand{\mynewthm}[3][green!50!black]{%
  \newtheorem*{#2x}{#3}%
  \beamerenv{#2}{#1}{#2x}%
}
\newcommand{\myiffrench}[2]{#2}
\newcommand{\myiffrench}[2]{\iflanguage{french}{#1}{#2}}
\theoremstyle{plain}
\theoremstyle{definition}
\theoremstyle{remark}
\newcommand{\myenumlabel}[1]{\textnormal{(\roman{#1})}}
\newcounter{cycprfcnt}
\newcounter{cycprffirst}
\newcommand{\cycprfpreamble}[1]%
{%
  \setcounter{cycprfcnt}{1}
  \setcounter{cycprffirst}{#1}
  \setlength{\itemindent}{0.5\leftmargin}%
  \setlength{\leftmargin}{0pt}%
  \newcommand{\cpcurr}{\myenumlabel{cycprfcnt}}%
  \newcommand{\cpnext}{\addtocounter{cycprfcnt}{1}\cpcurr}%
  \newcommand{\impnext}{\cpcurr{} $\Longrightarrow$ \cpnext.}%
  \def\makelabel##1{\ifnum\value{cycprffirst}=0\hspace{-0.7\itemindent}\setcounter{cycprffirst}{1}\fi##1}%
}%
\qedhere\end{list}}%
\newenvironment{cycprf*}[1][0]%
{\begin{list}{\impnext}{\cycprfpreamble{#1}}}%
{\end{list}}%
\def\indsym#1#2{%
  \setbox0=\hbox{$\m@th#1x$}%
  \kern\wd0%
  \hbox to 0pt{\hss$\m@th#1\mid$\hbox to 0pt{$\m@th#1^{#2}$\hss}\hss}%
  \lower.9\ht0\hbox to 0pt{\hss$\m@th#1\smile$\hss}%
  \kern\wd0}
\def\nindsym#1#2{%
  \setbox0=\hbox{$\m@th#1x$}%
  \kern\wd0%
  \hbox to 0pt{\hss$\m@th#1\not$\kern1.4\wd0\hss}
  \hbox to 0pt{\hss$\m@th#1\mid$\hbox to 0pt{$\m@th#1^{#2}$\hss}\hss}%
  \lower.9\ht0\hbox to 0pt{\hss$\m@th#1\smile$\hss}%
  \kern\wd0}
\def\dotminussym#1#2{%
  \setbox0=\hbox{$\m@th#1-$}%
  \kern.5\wd0%
  \hbox to 0pt{\hss\hbox{$\m@th#1-$}\hss}%
  \raise.6\ht0\hbox to 0pt{\hss$\m@th#1.$\hss}%
  \kern.5\wd0}
\renewcommand{\emptyset}{\varnothing}
\renewcommand{\setminus}{\smallsetminus}
\newcommand{\cB}{\mathcal{B}}
\newcommand{\cE}{\mathcal{E}}
\newcommand{\cF}{\mathcal{F}}
\newcommand{\cU}{\mathcal{U}}
\newcommand{\cV}{\mathcal{V}}
\newcommand{\cW}{\mathcal{W}}
\newcommand{\bN}{\mathbf{N}}
\newcommand{\bS}{\mathbf{S}}
\newcommand{\bZ}{\mathbf{Z}}
\begin{document}

\title{Isometrisable group actions}

\author{Itaï \textsc{Ben Yaacov}}

\address{Itaï \textsc{Ben Yaacov} \\
  Université Claude Bernard -- Lyon 1 \\
  Institut Camille Jordan, CNRS UMR 5208 \\
  43 boulevard du 11 novembre 1918 \\
  69622 Villeurbanne Cedex \\
  France}

\urladdr{\url{http://math.univ-lyon1.fr/~begnac/}}

\author{Julien \textsc{Melleray}}

\address{Julien \textsc{Melleray} \\
  Université Claude Bernard -- Lyon 1 \\
  Institut Camille Jordan, CNRS UMR 5208 \\
  43 boulevard du 11 novembre 1918 \\
  69622 Villeurbanne Cedex \\
  France}

\urladdr{\url{http://math.univ-lyon1.fr/~melleray/}}

\thanks{Research supported by ANR project GruPoLoCo (ANR-11-JS01-008).}

\svnInfo $Id: Isomble.tex 2574 2015-10-29 16:24:02Z begnac $
\thanks{\textit{Revision} {\svnInfoRevision} \textit{of} \today}

\keywords{group action, isometric group action, topological space, metric space}
\subjclass[2010]{54E35, 54H15}

\begin{abstract}
  Given a separable metrisable space $X$, and a group $G$ of homeomorphisms of $X$, we introduce a topological property of the action $G \curvearrowright X$ which is equivalent to the existence of a $G$-invariant compatible metric on $X$. This extends a result of Marjanović obtained under the additional assumption that $X$ is locally compact.
\end{abstract}

\maketitle

\tableofcontents

\section*{Introduction}

This paper grew out of the following question: given a metrisable topological space $X$, and a homeomorphism $g$ of $X$, how can one determine whether there exists a distance inducing the topology of $X$ and for which $g$ is an isometry?
More generally, it is interesting to determine when there exists a compatible invariant distance for an action by homeomorphisms of some group $G$ on $X$.
When this happens we say that the action $G \curvearrowright X$ is \emph{isometrisable}.

When $X$ is compact, this problem is well understood, and various characterisations are available -- for instance, in that case an action $G \curvearrowright X$ is isometrisable if and only if it is equicontinuous, in the sense that for any open $U \subseteq X \times X$ containing the diagonal $\Delta_X$, there exists an open $V \subseteq X \times X$ containing $\Delta_X$ and such that for all $g \in G$ one has $(g \times g) V \subseteq U$. One way to prove this is to note that, if the latter property holds, then the sets $G \cdot V$ form a countably generated uniformity which is compatible with the topology and admits a basis of invariant entourages, and such a uniformity comes from a $G$-invariant metric (as a general reference about uniformities and the basic facts about them used in this paper, the reader may consult Chapter 8 in \cite{Engelking:GeneralTopology}).
One could equivalently formalise the previous condition by saying that $G$ generates a relatively compact subgroup of the group of homeomorphisms of $X$, endowed with the topology of uniform convergence for some compatible distance on $X$ (here the nontrivial direction follows from the Arzel\`a-Ascoli theorem, or by considering averages of any compatible metric against the Haar measure of $G$).

Beyond that, only the locally compact case seems to be addressed in the literature.
Marjanović \cite{Marjanovic:TopologicalIsometries} appears to be the first with a significant result in this direction.
In order to formulate it, we recall that, if $\cF$ is a family of continuous maps from a topological space $X$ to a topological space $Y$, $\cF$ is said to be \emph{evenly continuous} if for all $x \in X$, all $y \in Y$ and all open $V \ni y$, there exists an open $U \ni x$ and an open $W$ with $y \in W \subseteq V$ and such that
\[
\forall f \in \cF \ f(x) \in W \Rightarrow f(U) \subseteq V.
\]

\begin{thm*}[Marjanović \cite{Marjanovic:TopologicalIsometries}]
  Let $X$ be a locally compact separable metrisable space, and $f$ be homeomorphism of $X$. Then there is a compatible distance for which $f$ is an isometry if, and only if, the family
  $\{f^n : n \in \bZ\}$ is evenly continuous from $X$ to its Alexandrov compactification.
\end{thm*}

This result was slightly extended by Borges \cite{Borges:Recognize} and Kiang \cite{Kiang:SemigroupsOfMappings}; it follows from Kiang's work that Marjanović's result extends to arbitrary groups acting on locally compact separable metrisable spaces (though that fact is not explicitly formulated in \cite{Kiang:SemigroupsOfMappings} and could also be deduced from Marjanović's argument, it is a direct consequence of the main theorem of \cite{Kiang:SemigroupsOfMappings}).
One obstacle to extend these results beyond the locally compact case is the presence of the Alexandrov compactification in the statement; another is that Marjanović's and Kiang's arguments rely heavily on compactness.
To address the first issue, one might try considering a stronger property than even continuity.

\begin{dfn*}[Royden \cite{Royden:RealAnalysis}]
  The action $G \curvearrowright X$ is \emph{topologically equicontinuous} if, for any $x,y \in X$ and any open subset $V \ni y$, there exists open subsets $W \ni x$ and $y \in U \subseteq V$ such that
  $$\forall g \in G \quad \left( g W \cap U \ne \emptyset \right) \Rightarrow gW \subseteq V \ . $$
\end{dfn*}

It is obvious that, if $G \curvearrowright X$ is isometrisable, then it is topologically equicontinuous. It is also not hard to check that, when $X$ is locally compact, even continuity of $G$ as a family of maps from $X$ to its Alexandrov compactification is equivalent to topological equicontinuity of $G$ as a family of maps from $X$ to itself.
Topological equicontinuity is a strong assumption, and we discuss some consequences in the second section. It appears not to be sufficient for isometrisability of the action $G \curvearrowright X$, leading us to consider an even stronger property.

\begin{dfn*}
  We say that $G \curvearrowright X$ is \emph{uniformly topologically equicontinuous} if, for any $y \in X$ and any open $V \ni y$, there exists an open $U$ with $y \in U \subseteq V$ and such that for all $x \in X$ there exists an open neighborhood $W$ of $x$ satisfying
  \[
  \forall g \in G \ (gW \cap U \ne \emptyset) \Rightarrow gW \subseteq V \ .
  \]
\end{dfn*}

Our main result is the following.

\begin{thm*}
  Let $X$ be a second countable metrisable space, and $G$ be a group acting on $X$ by homeomorphisms. Then the action $G \curvearrowright X$ is isometrisable if, and only if, it is uniformly topologically equicontinuous.
\end{thm*}

Rahter than directly defining a $G$-invariant compatible metric under the assumption of uniform topological equicontinuity, our argument proceeds by building a countably generated uniformity with a basis of $G$-invariant entourages, then using the fact that such an uniformity is generated by $G$-invariant pseudometrics, and finally using second countability to subsume this family into one $G$-invariant metric.

\section{Topological equicontinuity}

Throughout the text $X$ stands for a separable metrisable space, and $G$ is a group of homeomorphisms of $X$.

\begin{lem}
  \label{lem:Equicontinuity}
  Assume that $G \curvearrowright X$ is topologically equicontinuous.
  Assume that $x_n \rightarrow x$ and $y_n \rightarrow y$ in $X$, and let $(g_n) \in G^\bN$ be such $g_n x_n \rightarrow y$.
  Then $g_n^{-1} y_n \rightarrow x$.

  In particular, $g_n x \rightarrow y$ if and only if $g_n^{-1} y \rightarrow x$; thus a topologically equicontinuous action is minimal if and only if it is topologically transitive.
\end{lem}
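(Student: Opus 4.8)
The plan is to obtain $g_n^{-1}y_n\to x$ straight from the definition of topological equicontinuity, the only delicate point being the direction in which that definition is invoked. Fix an arbitrary open set $V\ni x$; it suffices to show $g_n^{-1}y_n\in V$ for all large $n$. Apply the definition of topological equicontinuity with the role of its first point (called $x$ there) played by our point $y$, the role of its second point (called $y$ there) played by our point $x$, and the ambient open set equal to $V$: this produces an open $W\ni y$ and an open $U$ with $x\in U\subseteq V$ such that $gW\cap U\ne\emptyset$ implies $gW\subseteq V$ for every $g\in G$. Now I would feed in the three hypotheses: from $x_n\to x\in U$ we get $x_n\in U$ eventually; from $g_nx_n\to y\in W$ we get $g_nx_n\in W$, i.e.\ $x_n\in g_n^{-1}W$, eventually; and from $y_n\to y\in W$ we get $y_n\in W$ eventually. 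For $n$ past all three thresholds, $x_n$ lies in $g_n^{-1}W\cap U$, so this open set is nonempty; hence $g_n^{-1}W\subseteq V$, and in particular $g_n^{-1}y_n\in g_n^{-1}W\subseteq V$. As $V\ni x$ was arbitrary, $g_n^{-1}y_n\to x$.

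For the two stated consequences: specialising to constant sequences $x_n\equiv x$, $y_n\equiv y$ shows $g_nx\to y\Rightarrow g_n^{-1}y\to x$, and applying this in turn to the sequence $(g_n^{-1})$ yields the reverse implication, so the two are equivalent. For the characterisation of minimality, one direction is immediate; for the other, assuming some orbit $Gx_0$ is dense and given an arbitrary $x\in X$, density provides $g_n\in G$ with $g_nx_0\to x$, whence $g_n^{-1}x\to x_0$ by the equivalence just established, so $x_0\in\overline{Gx}$. Since $\overline{Gx}$ is a closed $G$-invariant subset of $X$ containing $x_0$, it contains $\overline{Gx_0}=X$; thus every orbit is dense and the action is minimal.

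This lemma carries essentially no computational content, so there is no real ``main obstacle''; the one place requiring care is the first step, where topological equicontinuity must be applied ``backwards'' — with $y$ as the source point and $x$ as the target — so that it controls the maps $g_n^{-1}$ near $y$ rather than the maps $g_n$ near $x$. Applying it the naive way would not produce the conclusion. After that, the proof is merely a matter of intersecting the finitely many ``eventually'' conditions coming from the three convergence hypotheses.
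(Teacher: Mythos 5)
Your proof of the main assertion is correct and is essentially the paper's own argument: apply topological equicontinuity with $y$ as the source point and $x$ as the target, intersect the three ``eventually'' conditions, and conclude $g_n^{-1}W\subseteq V$. The remark that the definition must be applied ``backwards'' is exactly the one delicate point, and you handle it as the paper does.

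There is, however, a discrepancy in the final clause about minimality. You read ``topologically transitive'' as ``some orbit is dense'', but the paper means the weaker open-set condition: for any nonempty open $U,V$ there exists $g\in G$ with $gU\cap V\neq\emptyset$. In a general separable metrisable space this does not imply the existence of a dense orbit (the two coincide for Polish spaces by a Baire category argument, but $X$ here is only assumed separable metrisable), so your argument proves a weaker statement than intended. The paper's derivation under the open-set definition is the reason the lemma is stated for genuine sequences $x_n\to x$ rather than constant ones: given $x,y$, transitivity applied to shrinking neighbourhoods produces $x_n\to x$ and $g_n\in G$ with $g_nx_n\to y$, and the full first assertion (with $y_n\equiv y$) then gives $g_n^{-1}y\to x$, so the orbit of $y$ is dense. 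Your constant-sequence specialisation $g_nx\to y\Leftrightarrow g_n^{-1}y\to x$ is not enough for this step, though the fix is immediate once the intended definition is used.
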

\begin{proof}
  Fix $U$ open containing $x$, and find $V$ open contained in $U$ and containing $x$, $W$ open containing $y$ such that $(gW \cap V \ne \emptyset) \Rightarrow gW \subseteq U$ for any $g \in G$.
  For $n$ large enough $x_n \in V$ and $y_n, g_n x_n \in W$, so $g_n^{-1} W \subseteq U$, and in particular $g_n^{-1} y_n \in U$, as desired.

  To see why topological transitivity implies minimality, assume the action is topologically transitive (that is, for any nonempty open $U,V$ there exists $g \in G$ such that $gU \cap V \ne \emptyset$) and pick $x,y \in X$.
  By assumption, there exist $g_n \in G$ and $x_n \in X$ such that $x_n$ converges to $x$ and $g_n x_n $ converges to $y$. Hence $g_n^{-1}y$ converges to $x$, showing that the orbit of $y$ is dense.
\end{proof}

\begin{prp}
  \label{prp:Minimal}
  Assume that $G \curvearrowright X$ is minimal.
  Then $G \curvearrowright X$ is isometrisable if, and only if, it is topologically equicontinuous.
\end{prp}
\begin{proof}
  One implication is clear.
  For the other, assume that $G \curvearrowright X$ is topologically equicontinuous, and denote by $\tau$ the topology of $X$.
  Consider the family of sets of the form $G \cdot U^2 \subseteq X^2$ where $U$ varies over all nonempty open sets in $X$.

  Since the action is minimal, $G \cdot U^2$ contains the diagonal.
  Given such a set $G \cdot U^2$, find an open $\emptyset \neq V \subseteq U$ such that $gV \cap V \ne \emptyset \Rightarrow gV \subseteq U$.
  Assume now that $(x,y),(y,z) \in G \cdot V^2$, say $x,y \in h_1V$ and $y,z \in h_2V$.
  Then $h_1^{-1} y \in V \cap h_1^{-1}h_2 V$, hence $h_1^{-1}h_2 V \subseteq U$, so $h_1^{-1} z \in U$. Thus both $h_1^{-1}x$ and $h_1^{-1}z$ belong to $U$, and $(x,z) \in G \cdot U^2$.

  Thus the sets $G \cdot U^2$ form a basis of entourages for a uniformity, which is metrisable by a $G$-invariant distance $d$ since it is countably generated by $G$-invariant entourages, and we claim that it is compatible with the topology on $X$.
  Since each $G \cdot U^2$ is open, $d$ is continuous.
  Conversely, assume that $x_n \rightarrow^d x$.
  For every neighbourhood $U$ of $x$ we have $(x_n,x) \in G \cdot U^2$ for all $n$ large enough, giving rise to a sequence $(g_n)$ such that $g_n x_n \rightarrow x$ and $g_n x \rightarrow x$.
  By \autoref{lem:Equicontinuity} we have $x_n \rightarrow x$.
  Therefore $d$ is a compatible metric.
\end{proof}

When the action $G \curvearrowright X$ is assumed to be transitive, the above result appears as an exercise in Royden \cite{Royden:RealAnalysis}.

Next we introduce the ``topological ergodic decomposition'' associated to $G \curvearrowright X$.

\begin{dfn}
  \label{dfn:Equivalence}
  For $x,y \in X$, let $[x] = \overline{G x}$ and say that $x \sim y$ if $x \in [y]$.
\end{dfn}

\begin{lem} \label{lem:Equivalence}
  Assume that $G \curvearrowright X$ is topologically equicontinuous.
  Then the relation $\sim$ is a closed equivalence relation on $X$ (i.e., it is an equivalence relation which is closed as a subset of $X \times X$).

  In particular, $[x] = \{y : x \sim y\}$ and $x \sim y$ if and only if $[x] \cap [y] \neq \emptyset$.
\end{lem}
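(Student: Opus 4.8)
The plan is to split the properties of $\sim$ into those that hold for any group of homeomorphisms (reflexivity, transitivity) and those that genuinely require topological equicontinuity (symmetry, closedness), and to derive the latter two from \autoref{lem:Equicontinuity}. First I would record the hypothesis-free facts. Since each $g \in G$ is a homeomorphism, $g\cdot\overline{Gx} = \overline{g\cdot Gx} = \overline{Gx}$, so every class $[x] = \overline{Gx}$ is $G$-invariant. Reflexivity is then immediate from $x \in Gx \subseteq [x]$, and transitivity follows because $x \sim y$ means $x \in \overline{Gy}$, so $G$-invariance gives $Gx \subseteq \overline{Gy}$ and hence $[x] = \overline{Gx} \subseteq \overline{Gy} = [y]$; thus $x \sim y$ and $y \sim z$ force $[x] \subseteq [y] \subseteq [z]$, whence $x \in [z]$, i.e.\ $x \sim z$.

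For symmetry, suppose $x \sim y$. Since $X$ is metrisable, hence first countable, pick a sequence $(g_n)$ in $G$ with $g_n y \to x$. The ``in particular'' clause of \autoref{lem:Equicontinuity} --- that $g_n a \to b$ if and only if $g_n^{-1} b \to a$, for any $a, b \in X$ --- applied with $a = y$ and $b = x$ yields $g_n^{-1} x \to y$, so $y \in \overline{Gx}$, i.e.\ $y \sim x$. Combined with the inclusion of the previous paragraph, this gives $x \sim y \Rightarrow [x] = [y]$.

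For closedness, fix a compatible metric $d$ and suppose $(x_n,y_n) \to (x,y)$ with $x_n \sim y_n$ for every $n$. As $x_n \in \overline{Gy_n}$, choose $g_n \in G$ with $d(g_n y_n, x_n) < 2^{-n}$; then $g_n y_n \to x$. Applying \autoref{lem:Equicontinuity} to the sequence $y_n \to y$, the constant sequence at $x$, and the elements $g_n$ --- whose standing hypothesis is precisely $g_n y_n \to x$ --- gives $g_n^{-1} x \to y$, so $y \in \overline{Gx}$, i.e.\ $y \sim x$, and hence $x \sim y$ by symmetry. Thus $\sim$ is sequentially closed in the metrisable space $X \times X$, and therefore closed. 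The two remaining ``in particular'' statements are then purely formal: $\{y : x \sim y\} = \{y : x \in [y]\} = \{y : y \in [x]\} = [x]$ by symmetry, and $x \sim y \iff [x] \cap [y] \ne \emptyset$ since $x \sim y$ makes $[x] = [y] \ne \emptyset$, while any $z \in [x] \cap [y]$ satisfies $z \sim x$ and $z \sim y$, whence $x \sim y$ by symmetry and transitivity.

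The only delicate point, I expect, is the closedness argument, and there the key idea is to feed \autoref{lem:Equicontinuity} the \emph{constant} sequence at $x$: this converts the situation ``$g_n y_n \to x$ with $y_n$ only converging to $y$'' into the single convergence $g_n^{-1} x \to y$, i.e.\ the membership $y \in \overline{Gx}$ that we want (modulo symmetry). Everything else is routine bookkeeping with orbit closures.
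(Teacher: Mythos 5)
Your proof is correct and follows essentially the same route as the paper: symmetry and closedness are both reduced to \autoref{lem:Equicontinuity}, and in the closedness step you feed it the sequence $y_n \to y$, the constant sequence at $x$, and elements $g_n$ with $g_n y_n \to x$, exactly as the paper does (the paper extracts such $g_n$ via a neighbourhood basis rather than a $2^{-n}$ metric estimate, an immaterial difference). You simply spell out the reflexivity, transitivity, and ``in particular'' clauses that the paper declares clear.
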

\begin{proof}
  It is clear that $\sim$ is transitive and reflexive, and when $G \curvearrowright X$ is topologically equicontinuous it is also symmetric, by \autoref{lem:Equicontinuity}.
  In order to see that it is closed, assume that $(x_n,y_n) \rightarrow (x,y)$ in $X^2$, where $x_n \sim y_n$ for all $n$.
  If $U \ni x$ is open, for $n$ large enough we have $x_n \in U$, and since $x_n \sim y_n$, where $g_n \in G$ such that $g_n y_n \in U$.
  We thus construct a sequence $(g_n)$ with $g_n y_n \rightarrow x$.
  By \autoref{lem:Equicontinuity} we have $g_n^{-1} x \rightarrow y$ and $y \sim x$, as desired.
\end{proof}

Consequently the quotient space which we denote by $X \sslash G$ is Hausdorff.
If the action $G \curvearrowright X$ is isometrisable then this quotient must be metrisable.

\begin{lem}
  \label{lem:OpenQuotient}
  Assume that $G \curvearrowright X$ is topologically equicontinuous. Then the projection map $\pi \colon X \to X \sslash G$ is open.
\end{lem}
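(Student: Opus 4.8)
The plan is to use the standard criterion that $\pi$ is open if and only if the $\sim$-saturation of every open set is open, since by definition of the quotient topology $\pi(U)$ is open precisely when $\pi^{-1}(\pi(U))$ is. So fix a nonempty open $U \subseteq X$ and try to describe $\pi^{-1}(\pi(U))$ explicitly. By definition, $y \in \pi^{-1}(\pi(U))$ if and only if $y \sim x$ for some $x \in U$.

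The key step is to unfold this condition using the symmetry of $\sim$, which holds by \autoref{lem:Equicontinuity} (and is recorded in \autoref{lem:Equivalence}). Indeed $y \sim x$ is equivalent to $x \sim y$, i.e.\ to $x \in [y] = \overline{Gy}$. Hence $y \in \pi^{-1}(\pi(U))$ if and only if $U \cap \overline{Gy} \neq \emptyset$, and — here is where openness of $U$ is used — this happens if and only if $U \cap Gy \neq \emptyset$, i.e.\ $gy \in U$ for some $g \in G$, equivalently $y \in g^{-1}U$ for some $g \in G$. Therefore $\pi^{-1}(\pi(U)) = GU = \bigcup_{g \in G} gU$.

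Since each $g \in G$ acts as a homeomorphism of $X$, the set $GU$ is open, so $\pi^{-1}(\pi(U))$ is open and thus $\pi(U)$ is open in $X \sslash G$; as $U$ was arbitrary, $\pi$ is open. I do not expect any real obstacle: the only substantive input is the symmetry of $\sim$ (which rests on topological equicontinuity via \autoref{lem:Equicontinuity}), and the only point needing a line of care is the reduction from $\overline{Gy}$ to $Gy$, valid because $U$ is open.
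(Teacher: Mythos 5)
Your proof is correct and follows essentially the same route as the paper: both identify the $\sim$-saturation of an open $U$ as $GU$, using the symmetry of $\sim$ (from \autoref{lem:Equicontinuity}) together with the openness of $U$ to pass from $U \cap \overline{Gy} \neq \emptyset$ to $U \cap Gy \neq \emptyset$. Your write-up just makes explicit a couple of steps the paper leaves implicit.
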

\begin{proof}
  Let $U \subseteq X$ be open, $x \in U$ and $y \sim x$.
  Then $G y \cap U \neq \emptyset$, or equivalently, $y \in GU$.
  It follows that the open set $GU$ is the $\sim$-saturation of $U$, so $\pi U$ is open.
\end{proof}


\section{Uniform topological equicontinuity and isometrisability}

Metrisability of $X \sslash G$ is obviously a necessary condition for the action $G \curvearrowright X$ to be isometrisable.
Outside the realm of locally compact spaces, this seems to require a stronger hypothesis than mere topological equicontinuity.

\begin{dfn}
  We say that $G \curvearrowright X$ is \emph{uniformly topologically equicontinuous} if for any $x \in X$ and any open $V \ni x$ there exists an open $U$ with $x \in U \subseteq V$ such that for all $y \in X$ there exists an open $W_y \ni y$ satisfying
  \[
  \forall g \in G \ (gW_y \cap U \ne \emptyset) \Rightarrow gW_y \subseteq V.
  \]
  When the conditions above are satisfied, we say that $U$ witnesses uniform topological equicontinuity for $x,V$.
\end{dfn}

This definition is obtained by inverting two quantifiers in the definition of topological equicontinuity, and is still a necessary condition for isometrisability of $G \curvearrowright X$.

\begin{prp}
  Assume that $G \curvearrowright X$ is uniformly topologically equicontinuous.
  Then $X \sslash G$ is metrisable.
\end{prp}
\begin{proof}
  Since $X$ is second countable so is $X \sslash G$, and it will suffice to prove that $X \sslash G$ is regular.
  In other words, we need to prove that given a closed $G$-invariant $F \subseteq X$ and $x \not \in F$, there exist open sets $U \ni x$ and $W \supseteq F$ such that $U \cap GW = \emptyset$.
  We choose $U$ which witnesses uniform topological equicontinuity for $x, X \setminus F$, and for each $y \in F$ we let $W_y \ni y$ be the corresponding neighbourhood.
  If there existed $y \in F$ and $g \in G$ such that $g W_y \cap U \neq \emptyset$ then $g W_y \subseteq X \setminus F$ and in particular $gy \notin F$, a contradiction.
  Therefore $U \cap \bigcup_{y \in F} GW_y = \emptyset$, which is enough.
\end{proof}

Given Marjanović's result recalled in the introduction, the following fact is worth mentioning.
(If one merely wishes to prove that $X \sslash G$ is metrisable when $X$ is locally compact and the action is topologically equicontinuous, a much shorter argument exists.)

\begin{prp}
  Let $X$ be a locally compact separable metrisable space, and $G$ a group acting on $X$ by homeomorphisms.
  The following conditions are equivalent.
  \begin{enumerate}
  \item \label{p:topeq1} $G \curvearrowright X$ is uniformly topologically equicontinuous.
  \item \label{p:topeq2} $G \curvearrowright X$ is topologically equicontinuous.
  \item \label{p:topeq3} $G$, seen as a family of maps from $X$ to its Alexandrov compactification $X^*$, is evenly continuous.
  \end{enumerate}
\end{prp}
\begin{proof}
  Note that \autoref{p:topeq3} is equivalent to saying that, for all $x \in X$ and $y \in X^*$, if $(x_i)$ converges to $x$ and $(g_i x) $ converges to $y$ then $(g_i x_i)$ also converges to $y$.

  The implication \autoref{p:topeq1} $\Rightarrow$ \autoref{p:topeq2} is by definition. To see that $\autoref{p:topeq2}$ implies $\autoref{p:topeq3}$, assume that there exists $x \in X$ and a compact $K \subseteq X$ such that for all open $U \ni x$ and for all compact $L \supseteq K$ there is $g \in G$ such that $g(x) \not \in L$ and $g(U) \cap K \ne \emptyset$.
  From this we may build a sequence $(x_i)$ converging to $x$ and elements $g_i \in G$ such that $g_i(x) \to \infty$ and $g_i(x_i) \to k \in K$. This is incompatible with \autoref{p:topeq2}.

  It remains to prove that \autoref{p:topeq3} $\Rightarrow$ \autoref{p:topeq1}.
  We again proceed by contradiction and assume that $G \curvearrowright X$ is not uniformly topologically equicontinuous but $G$ is an evenly continuous family of maps from $X$ to $X^*$.
  By assumption, there exists $y \in X$ and an open $V \ni y$ such that for any open $U$ with $y \in U \subseteq V$ there exists $x \in X$ such that for all open $W \ni x$ there exists $g \in G$ with both $gW \cap U \ne \emptyset$ and $gW \not \subseteq V$.
  Letting $U$ vary over a basis of open neighborhoods of $y$, we obtain a sequence $(x_i)$ witnessing the above condition; up to extractions we see that there are two cases to consider:

  \begin{itemize}
  \item $(x_i)$ converges to some $x \in X$.
    Then there exists sequences $(g_i)$ and $(y_i), (z_i)$ converging to $x$ such that $g_i y_i$ converges to $y$ and $g_i z_i$ lives outside $V$.
    Up to some extraction, we may assume that $g_i x$ and $g_i z_i$ both converge in $X^*$, and the fact that $g_i y_i$ and $g_i z_i$ have different limits shows that even continuity must be violated at $x$.

  \item $(x_i)$ converges to $\infty$, and for all compact $K$ there exists $I$ such that for all $i \ge I$ and all $g$ one has $gx_i \not \in K$ (otherwise, replacing $x_i$ by some $g_i x_i$ and going to a subsequence we would be in the situation of the first case above).
    Letting $U$ be a relatively compact neighborhood of $y$, we see that for $i$ large enough we have $ \overline{G x_i} \cap \overline{ U} = \emptyset$.
    Then the even continuity of $G$ implies that there must exist some neighborhood $W$ of $x_i$ such that $G W \cap \overline{U} = \emptyset$ (by essentially the same argument as above), which contradicts the choice of $x_i$.
  \end{itemize}
\end{proof}

\begin{dfn}
  Let $\cU$ be an open cover of $X$.
  We say that it is \emph{$G$-invariant} if for any $U$ one has $U \in \cU \Rightarrow gU \in \cU$.

  A \emph{$G$-basis} of a $G$-invariant open cover $\cU$ is a subset $\cB$ such that all elements of $\cU$ are of the form $gB$ for some $B \in \cB$.

  We say that a $G$-invariant open cover $\cU$ is \emph{$G$-locally finite} if it admits a $G$-basis $\cB$ such that for any $x \in X$ there exists a neighborhood $A$ of $x$ (not necessarily belonging to $\cU$) such that $\{B \in \cB : \exists g \in G \ gB \cap A \ne \emptyset\}$ is finite.
\end{dfn}

\begin{lem} \label{l:locfin}
  Assume that $G \curvearrowright X$ is uniformly topologically equicontinuous. Then any $G$-invariant open cover admits a $G$-locally finite open refinement.
\end{lem}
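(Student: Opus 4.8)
The plan is to deduce this from paracompactness of the quotient $X \sslash G$. Since the action is uniformly topologically equicontinuous, $X \sslash G$ is metrisable (as established above), hence paracompact; and by \autoref{lem:OpenQuotient} (or directly, since $\pi$ is a quotient map and $GU$ is $\sim$-saturated whenever $U$ is open) each saturation $GU$ of an open set is open and descends to an open subset $\pi(GU) \subseteq X \sslash G$. So, given a $G$-invariant open cover $\cU$, I would first push it down to the open cover $\{\pi(GU) : U \in \cU\}$ of $X \sslash G$, choose a locally finite open refinement $\{\tilde O_i : i \in I\}$ of the latter, fix for each $i$ some $U_i \in \cU$ with $\tilde O_i \subseteq \pi(GU_i)$, and pull back: $O_i := \pi^{-1}(\tilde O_i)$ is a $\sim$-saturated (in particular $G$-invariant) open subset of $X$ with $O_i \subseteq GU_i$, the $O_i$ cover $X$, and $\{O_i\}$ is locally finite in $X$ — for $x \in X$, if $N \ni \pi(x)$ meets only finitely many of the $\tilde O_i$ then $A := \pi^{-1}(N) \ni x$ meets only finitely many of the $O_i$, using $O_i \cap A = \pi^{-1}(\tilde O_i \cap N)$ and surjectivity of $\pi$.

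The refinement of $\cU$ itself is then obtained by cutting each $O_i$ down to a single translate: put $V_i := O_i \cap U_i$, $\cB := \{V_i : i \in I\}$, and $\cV := \{gV_i : g \in G,\ i \in I\}$. This $\cV$ is a $G$-invariant family of open sets with $G$-basis $\cB$; it refines $\cU$ because $gV_i \subseteq gU_i \in \cU$; and it covers $X$ because for $z \in O_i$ one has $z \in gU_i$ for some $g$, whence $g^{-1}z \in O_i \cap U_i = V_i$ by $G$-invariance of $O_i$, so $z \in gV_i$. Finally $\cV$ is $G$-locally finite via $\cB$: with $A$ as in the previous paragraph, $gV_i \cap A \ne \emptyset$ forces $O_i \cap A \ne \emptyset$ (since $gV_i \subseteq gO_i = O_i$), so $\{B \in \cB : \exists g \in G\ gB \cap A \ne \emptyset\}$ is contained in the finite set $\{V_i : O_i \cap A \ne \emptyset\}$.

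There is no real obstacle here: the one substantive ingredient is the already-established metrisability of $X \sslash G$, which is exactly where uniform topological equicontinuity enters, and everything else is formal manipulation of saturations and the quotient map. The only point requiring a little care is that the fibres of $\pi$ are the orbit closures $\overline{Gx}$ rather than the orbits, so one must first check that saturations $GU$ of open sets are genuinely $\sim$-saturated before treating them as open subsets of $X \sslash G$ — but this is precisely the observation made in the proof of \autoref{lem:OpenQuotient}.
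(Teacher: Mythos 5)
Your proof is correct and follows essentially the same route as the paper's: both pass to the metrisable (hence paracompact) quotient $X \sslash G$, take a locally finite refinement there, and define the $G$-basis by intersecting a chosen element $U_i$ of $\cU$ with the preimage of the corresponding refinement element (your $V_i = U_i \cap \pi^{-1}(\tilde O_i)$ is exactly the paper's $W_V = U_V \cap \pi^{-1}(V)$, since $\pi(GU) = \pi(U)$). Your write-up merely spells out the covering and local-finiteness checks in more detail.
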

\begin{proof}
  Let $\cU$ be a $G$-invariant open cover.
  Let also $\pi\colon X \rightarrow X \sslash G$ denote the open quotient map.
  Since $X \sslash G$ is metrisable, it is paracompact (see e.g.\ Theorem 4.4.1 in \cite{Engelking:GeneralTopology}), so we can find a locally finite refinement $\cV$ of $\pi \cU$.
  For any $V \in \cV$, pick some $U_V \in \cU$ such that $\pi(U_V) \supseteq V$, and set $W_V=U_V \cap \pi^{-1}(V)$.
  Let $\cW$ be the $G$-invariant open cover with $G$-basis $\{W_V : V \in \cV\}$.
  By construction $\cW$ is an open cover and refines $\cU$.

  Now pick any $x \in X$.
  There is an open neighborhood $O$ of $\pi(x)$ which meets only finitely many elements of $\cV$.
  If $W_V$ is such that $g W_V \cap \pi^{-1}(O) \ne \emptyset$ for some $g \in G$, then $V \cap O \ne \emptyset$, so $\cW$ is $G$-locally finite.
\end{proof}

\begin{ntn}
  To an open cover $\cU$ of $X$ we associate an entourage $\cE(\cU) = \bigcup_{U \in \cU} U^2 \subseteq X^2$.
\end{ntn}

\begin{lem}\label{l:triangle}
  Assume that $G \curvearrowright X$ is uniformly topologically equicontinuous.
  Let $\cU$ be a $G$-invariant open cover of $X$.
  Then there exists a $G$-invariant open refinement $\cV$ of $\cU$ with the property that for all $x,y,z \in X$, if $(x,y),(y,z) \in \cE(\cV)$ then $(x,z) \in \cE(\cU)$.
\end{lem}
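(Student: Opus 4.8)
The plan is to find a $G$-invariant open refinement $\cV$ of $\cU$ carried by a $G$-basis $\{P_k\}$ for which $P_k\cap gP_l\ne\emptyset$ implies $P_k\cup gP_l\subseteq U$ for some $U\in\cU$. This already gives the triangle condition: if $(x,y),(y,z)\in\cE(\cV)$, pick $V\ni x,y$ and $V'\ni y,z$ in $\cV$; writing $V=gP_k$, $V'=g'P_l$ and $h=g^{-1}g'$ we get $P_k\cap hP_l\ne\emptyset$, hence $P_k\cup hP_l\subseteq U'$ for some $U'\in\cU$, so $g^{-1}x,g^{-1}z\in U'$, i.e.\ $x,z\in gU'\in\cU$ and $(x,z)\in\cE(\cU)$. (As $\cV$ is $G$-invariant, $\cE(\cV)$ is automatically $(g\times g)$-invariant, which is what makes such a $\cV$ useful downstream.)

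First I would replace $\cU$, by \autoref{l:locfin}, with a $G$-locally finite open refinement: fix a $G$-basis $\cB=\{B_i:i\in I\}$ and, for each $x$, an open $A_x\ni x$ with $I_x:=\{i\in I:\exists g\ gB_i\cap A_x\ne\emptyset\}$ finite. Since $X\sslash G$ is metrisable (shown earlier), it is paracompact, so I fix a locally finite open cover $\cO$ of $X\sslash G$ that star-refines $\{\pi(U):U\in\cU\}$, where $\pi\colon X\to X\sslash G$ is the open quotient of \autoref{lem:OpenQuotient}; shrinking each $A_x$, I may assume $\pi(A_x)$ meets only finitely many members of $\cO$. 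The effect of these two reductions is that each point of $X$ sees only finitely many $\cU$-classes and lies over only finitely many members of $\cO$ — this is what will keep the neighbourhoods built below open.

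Now uniform topological equicontinuity enters, and the decisive feature is that the neighbourhood $W_y$ it furnishes works for all $g$ at once. As a first, clean step I would prove a sub-lemma: applying uniform topological equicontinuity to each pair $(x,B_i)$ with $x\in B_i$ gives an open $Q_{i,x}$ with $x\in Q_{i,x}\subseteq B_i$ and neighbourhoods $W^{i,x}_z\ni z$ with $hW^{i,x}_z\cap Q_{i,x}\ne\emptyset\Rightarrow hW^{i,x}_z\subseteq B_i$ for all $h\in G$; then the sets $B'_{i,x}:=Q_{i,x}\cap W^{i,x}_x$ form a $G$-basis of a $G$-invariant open refinement of $\cU$ in which, for every basic piece $V$ and every $g$, $V\cap gV\ne\emptyset$ forces $V\cup gV\subseteq B_i\in\cU$ — any $w\in V\cap gV$ lies in $gW^{i,x}_x\cap Q_{i,x}$, whence $gW^{i,x}_x\subseteq B_i$, and also $V\subseteq W^{i,x}_x\subseteq B_i$ (the case $h=e$). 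The remaining work is to combine this "self-overlap" control with the cover $\cO$: intersecting pieces with the saturated sets $\pi^{-1}(O)$ confines their $\pi$-images, so pieces lying over $\cO$-disjoint parts of $X\sslash G$ cannot overlap, while pieces over the same part of $\cO$ reduce — after passing to the correct $G$-translate of the relevant $B_i$ — to the self-overlap case. This should produce $\cV$, whose basic pieces have the shape $A_x\cap\pi^{-1}(\text{relevant }O)\cap Q_{?}\cap(\text{finitely many relevant }W^{?}_x)$ and sit inside single translates of members of $\cB$.

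The main obstacle is exactly the bookkeeping in that combination. One must define each basic piece $P_x$ as a \emph{finite} intersection — so it is open and contains $x$ — while ensuring that for \emph{every} overlap $P_x\cap gP_{x'}\ne\emptyset$ that can occur, the particular witness neighbourhood needed to conclude $gP_{x'}\subseteq U$ was already included in the definition of $P_{x'}$, and also that the "trigger" set $Q$ (which must contain $P_x$ for the implication $gW\cap Q\ne\emptyset\Rightarrow gW\subseteq U$ to fire) was not shrunk below $P_x$. These constraints pull in opposite directions — completeness wants many witnesses, openness wants few — and reconciling them is precisely where one needs both finiteness inputs, namely the $G$-local finiteness of $\cU$ (bounding the relevant indices $i$) and the local finiteness of $\cO$ in $X\sslash G$ (bounding the relevant members $O$, once $\pi(A_x)$ has been made small), together with the fibred picture over $X\sslash G$. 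This is the technical heart of the proof and the step I would expect to require the most care.
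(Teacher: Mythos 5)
Your reduction of the triangle condition to the ``pairwise star'' property (any two overlapping members of $\cV$ lie in a common member of $\cU$) is correct, and your ingredients --- \autoref{l:locfin} and the uniformity in $g$ of the witness $W_y$ --- are the right ones. But the construction of $\cV$ is not completed, and the plan you outline for completing it has a genuine gap. In your scheme the trigger sets $Q_{i,x}$ and the witnesses $W^{i,x}_z$ are indexed by \emph{pairs} $(i,x)$ with $x\in B_i$. For a piece $P_{x'}$ to behave correctly under every possible overlap $P_x\cap gP_{x'}\ne\emptyset$, it must contain $W^{i,x}_{x'}$ for every such pair that can occur; $G$-local finiteness bounds the index $i$ near $x'$, but not the point $x$, so you are asked to intersect infinitely many witnesses and openness is lost. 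The star-refining cover $\cO$ of $X\sslash G$ does not repair this: knowing that $\pi(P_x)$ and $\pi(gP_{x'})$ lie in a common $\pi(U)$ controls only the images downstairs and gives no containment of $P_x\cup gP_{x'}$ in a single translate of a member of $\cU$ upstairs. Your proposed reduction ``to the self-overlap case'' for pieces over the same $O$ is exactly where this breaks: the self-overlap sub-lemma handles $V\cap gV$, not $V\cap gV'$ for distinct basic pieces, and bridging the two requires precisely the witnesses you cannot afford to include.

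The paper resolves this by decoupling the trigger sets from the pieces of $\cV$. It first builds an intermediate $G$-invariant refinement $\cW$ of $\cU$ such that each $W\in\cW$ has a parent $U(W)\in\cU$ and, for every $y$, a witness $C_{W,y}\ni y$ with $gC_{W,y}\cap W\ne\emptyset\Rightarrow gC_{W,y}\subseteq U(W)$; the witness at $y$ is thus indexed by $W$ alone. After making $\cW$ $G$-locally finite with $G$-basis $\cB$ via \autoref{l:locfin}, it defines $\cV$ to be \emph{all} open $V$ such that $gV\cap B\ne\emptyset\Rightarrow gV\subseteq U(B)$ for every $g\in G$ and $B\in\cB$; this is a cover because $A\cap\bigcap_{B\in\cB_A}C_{B,x}$ is a \emph{finite} intersection (only finitely many $B$ are relevant near $x$, and the irrelevant ones never trigger). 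The triangle condition is then verified not by a direct pairwise analysis of $V_1,V_2$ but by routing through the common point $y$: since $\cW$ covers, $gy\in B$ for some $g,B$, whence both $gV_1$ and $gV_2$ meet $B$ and land in $U(B)$, giving $x,z\in g^{-1}U(B)\in\cU$. If you reorganise your argument so that the trigger condition refers to the members of the intermediate cover rather than to the pieces of $\cV$ themselves, your bookkeeping obstacle disappears.
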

\begin{proof}
  Uniform topological equicontinuity of the action enables us to find a $G$-invariant open refinement $\cW$ of $\cU$ with the property that for all $W \in \cW$ there exists $U(W) \in \cU$ containing $W$ such that for all $y \in X$ there exists an open $C_{W,y} \ni y$ satisfying $gC_{W,y} \cap W \ne \emptyset \Rightarrow gC_{W,y} \subseteq U(W)$.
  Using \autoref{l:locfin} we may assume that $\cW$ is $G$-locally finite and $\cB$ is a $G$-basis of $\cW$ witnessing that property.
  We let $\cV$ consist of all open sets $V$ such that for all $g \in G$ and $B \in \cB$:
  \begin{gather*}
    gV \cap B \neq \emptyset \Rightarrow gV \subseteq U(B).
  \end{gather*}
  Given $x \in X$ there exists an open $A \ni x$ such that $\cB_A=\{B \in \cB : gB \cap A \ne \emptyset\}$ is finite, so $x \in A \cap \bigcap_{B \in \cB_A} C_{B,x} \in \cV$.
  Thus $\cV$ is a cover, and it is clearly $G$-invariant and refines $\cU$.

  Assume now that $(x,y),(y,z) \in \cE(\cV)$, say $x,y \in V_1$ and $y,z \in V_2$ where $V_i \in \cV$.
  There exist some $B \in \cB$ and $g \in G$ such that $gy \in B$, so $gV_1 \cup gV_2 \subseteq U(B)$ and $x,z \in g^{-1} U(B) \in \cU$.
\end{proof}

\begin{lem}\label{l:separation}
  Assume that $G \curvearrowright X$ is uniformly topologically equicontinuous, and fix $x \in X$.
  For any $G$-invariant open cover $\cU$ there exists a $G$-invariant open refinement of $\cU$ with $G$-basis $\cB$ and $B \in \cB$ such that for any $A \in \cB$ different from $B$ and any $g \in G$ one has $x \not \in gA$.
\end{lem}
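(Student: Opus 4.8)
The plan is to transfer the problem to the metrisable quotient $X \sslash G$ and then imitate the pull-back construction from the proof of \autoref{l:locfin}. Write $\pi\colon X \to X\sslash G$ for the quotient map, which is open by \autoref{lem:OpenQuotient}; recall from that proof that for every open $U \subseteq X$ one has $\pi^{-1}(\pi(U)) = GU$. Fix $x$, set $\bar x = \pi(x)$, and choose $U^* \in \cU$ with $x \in U^*$. Then $\pi(U^*)$ is an open neighbourhood of $\bar x$ in $X\sslash G$, which is metrisable hence regular, so we may fix an open $O$ with $\bar x \in O \subseteq \overline O \subseteq \pi(U^*)$.

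First I would replace $\pi\cU$ by a refinement in which $\bar x$ is covered exactly once, namely
\[
\cV = \{\pi(U^*)\} \cup \bigl\{\pi(U)\setminus\overline O : U \in \cU\bigr\}.
\]
Every member of $\cV$ is open and lies inside a member of $\pi\cU$; moreover $\cV$ covers $X \sslash G$ (a point in $\overline O$ lies in $\pi(U^*)$, a point outside $\overline O$ lies in $\pi(U)\setminus\overline O$ for whichever $U \in \cU$ has $\pi(U)$ containing it); and $\pi(U^*)$ is the unique element of $\cV$ containing $\bar x$. Then I would pull $\cV$ back as in \autoref{l:locfin}: for each $V \in \cV$ pick $U_V \in \cU$ with $\pi(U_V) \supseteq V$, taking $U_{\pi(U^*)} = U^*$ and $U_V = U$ whenever $V = \pi(U)\setminus\overline O$; set $W_V = U_V \cap \pi^{-1}(V)$; let $\cB = \{W_V : V \in \cV\}$; and let the sought refinement be the $G$-invariant open cover with $G$-basis $\cB$. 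It refines $\cU$ since $gW_V \subseteq gU_V \in \cU$, and it covers $X$ since any $y$ has $\bar y \in V$ for some $V \in \cV$, whence $y \in \pi^{-1}(\pi(U_V)) = GU_V$ and some $g^{-1}y$ lands in $U_V \cap \pi^{-1}(V) = W_V$. Finally put $B = W_{\pi(U^*)}$: then $x \in B$, because $x \in U^*$ and $\bar x \in \pi(U^*)$; while for any other $A = W_V \in \cB$ we have $V \neq \pi(U^*)$, hence $\bar x \notin V$, hence $\pi(gA) = \pi(W_V) \subseteq V$ avoids $\bar x$ for every $g \in G$, so $x \notin gA$.

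The construction is largely mechanical, and I expect the only delicate point to be the verification that $\cB$ is a genuine $G$-basis of an open cover of $X$ — more precisely, the use of the identity $\pi^{-1}(\pi(U_V)) = GU_V$. This is exactly where the hypothesis enters (through openness of $\pi$ and symmetry of $\sim$, both coming from uniform topological equicontinuity via the earlier lemmas), and the point to be careful about is that a $\sim$-class meeting the open set $U_V$ in fact meets it in a whole $G$-orbit, so that $GU_V$ — and not just the larger set $[y]$ — already contains $y$.
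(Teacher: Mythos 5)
Your proof is correct, but it takes a genuinely different route from the paper's. The paper argues directly from the definition of uniform topological equicontinuity: pick $U \in \cU$ with $x \in U$ and, in effect, apply the definition at $x$ to the $G$-invariant open set $GU$; this yields a neighbourhood $V \ni x$ such that every $y \in X \setminus GU$ has a neighbourhood $W_y$ with $gW_y \cap V = \emptyset$ for all $g \in G$ (a translate of $W_y$ meeting $V$ would have to lie inside $GU$, yet it contains $gy \notin GU$). After shrinking each $W_y$ into a member of $\cU$, the family $\{U\} \cup \{W_y : y \in X \setminus GU\}$ is the desired $G$-basis with $B = U$. You instead descend to $X \sslash G$, use its regularity to build a refinement of $\pi\cU$ in which $\pi(x)$ is covered exactly once, and pull back as in \autoref{l:locfin}; your verification is sound, with the saturation identity $\pi^{-1}(\pi(U)) = GU$ and the $G$-invariance of $\pi^{-1}(V)$ doing the work, so that the only member of $\cB$ with a $G$-translate containing $x$ is $B = W_{\pi(U^*)}$. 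What your route buys is a clean separation of concerns --- uniform topological equicontinuity enters only through the regularity of $X \sslash G$ and the machinery of \autoref{l:locfin} is reused verbatim; what it costs is reliance on the metrisability of the quotient and on \autoref{lem:OpenQuotient}, where the paper's argument is a self-contained two-line application of the definition. Both are valid proofs of the lemma.
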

\begin{proof}
  Pick $U \in \cU$ such that $x \in U$. Using uniform topological equicontinuity, choose an open neighborhood $V$ of $x$ such that for any $y \in X \setminus GU$ there exists an open set $W_y$ satisfying $g W_y \cap V = \emptyset$ for all $g \in G$. Refining if necessary, we may assume that each $W_y$ is contained in some element of $\cU$; then $\{W_y : y \in X \setminus GU\} \cup \{U\}$ form a $G$-basis for a $G$-invariant open refinement of $\cU$ with the desired property.
\end{proof}

\begin{lem}\label{l:pseudometric}
  Assume that $G \curvearrowright X$ is uniformly topologically equicontinuous.
  Then for any $x \in X$ there exists a continuous $G$-invariant pseudometric $d_x$ such that $d_x(x_i,x)$ converges to $0$ if and only if $(x_i)$ converges to $x$.
\end{lem}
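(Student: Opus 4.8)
The plan is to obtain $d_x$ from a descending chain of $G$-invariant open covers of $X$ by the classical passage from a chain of entourages to a pseudometric, choosing the covers so that their sections at $x$ shrink to a neighbourhood basis of $x$. For an open cover $\cU$ of $X$ write $\cE(\cU)[x] = \bigcup\{U \in \cU : x \in U\}$ for the section of the entourage $\cE(\cU)$ at $x$; this is an open neighbourhood of $x$, and if $\cU$ is $G$-invariant then $\cE(\cU)$ is invariant under the diagonal action of $G$ on $X^2$.

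First I would build $G$-invariant open covers $\cU_0, \cU_1, \cU_2, \ldots$, each refining the previous one, with $\cU_0 = \{X\}$ and, writing $E_n := \cE(\cU_n)$, such that $E_{n+1} \circ E_{n+1} \circ E_{n+1} \subseteq E_n$ for all $n$, and $E_n[x] \subseteq V_n$ for all $n \ge 1$, where $(V_n)_{n \ge 1}$ is a fixed decreasing neighbourhood basis of $x$ (available since $X$ is metrisable). Given $\cU_{n-1}$, two applications of \autoref{l:triangle} give a $G$-invariant open refinement $\cU'$ of $\cU_{n-1}$ such that, with $F := \cE(\cU')$, one has $F \circ F \circ F \circ F \subseteq E_{n-1}$. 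It remains to refine $\cU'$ once more, preserving $G$-invariance, to a cover $\cU_n$ with $\cE(\cU_n)[x] \subseteq V_n$, and this is the single point where uniform topological equicontinuity (rather than the pointwise version) is used. Choose $U$ witnessing uniform topological equicontinuity for $x, V_n$; for each $z \in X$ pick a member of $\cU'$ containing $z$ and intersect it with the neighbourhood of $z$ provided by uniform topological equicontinuity to obtain an open $W_z \ni z$ lying inside a member of $\cU'$ and still satisfying $gW_z \cap U \ne \emptyset \Rightarrow gW_z \subseteq V_n$ for all $g \in G$ (shrinking $W_z$ only shrinks $gW_z$). Let $\cU_n$ be the $G$-invariant open cover with $G$-basis $\{W_z : z \in X\}$. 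Then $\cU_n$ covers $X$, is $G$-invariant, and refines $\cU'$; hence $E_n := \cE(\cU_n) \subseteq F$, so $E_n \circ E_n \circ E_n \subseteq F \circ F \circ F \subseteq F \circ F \circ F \circ F \subseteq E_{n-1}$ (using $\Delta \subseteq F$); and if a member $gW_z$ of $\cU_n$ contains $x$, then $gW_z \cap U \ne \emptyset$ since $x \in U$, so $gW_z \subseteq V_n$, whence $E_n[x] \subseteq V_n$.

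With the chain $(E_n)$ — a decreasing sequence of symmetric open entourages, $E_0 = X^2$, with triple composition into the previous term — the standard construction of a pseudometric from such a chain (see Chapter 8 of \cite{Engelking:GeneralTopology}) yields a pseudometric $d_x$ on $X$ with $E_n \subseteq \{(y,z) : d_x(y,z) < 2^{-n}\} \subseteq E_{n-1}$ for every $n \ge 1$. Three verifications then finish the proof. (i) $d_x$ is $G$-invariant: each $E_n$ is $G$-invariant, and $d_x(y,z)$ is an infimum over finite chains from $y$ to $z$ of sums of a weight depending only on which $E_n$ contains each consecutive pair — a datum carried along by the $G$-action on chains. (ii) $d_x$ is continuous on $X^2$: for $a \in X$ and $n \ge 1$ the set $E_n[a]$ is an open neighbourhood of $a$ contained in $\{b : d_x(a,b) < 2^{-n}\}$, so every $d_x$-ball about $a$ is a neighbourhood of $a$, which gives continuity of $d_x \colon X^2 \to \bR$. (iii) $d_x$ detects convergence to $x$: if $x_i \to x$ then $d_x(x_i,x) \to d_x(x,x) = 0$ by (ii); conversely, if $d_x(x_i,x) \to 0$ and $V$ is any neighbourhood of $x$, pick $n \ge 1$ with $V_n \subseteq V$ and note that $\{b : d_x(x,b) < 2^{-(n+1)}\} \subseteq E_n[x] \subseteq V_n \subseteq V$, so $x_i \in V$ for $i$ large; as $V$ was arbitrary, $x_i \to x$.

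The main obstacle is the last refinement in the recursive step: manufacturing, inside a given $G$-invariant cover, a $G$-invariant refinement all of whose members meeting a fixed neighbourhood of $x$ are contained in a prescribed neighbourhood of $x$. This is exactly where uniformity of the equicontinuity hypothesis is indispensable — pointwise topological equicontinuity would produce a neighbourhood $U_z$ depending on $z$, and the implication ``$x \in gW_z \Rightarrow gW_z \subseteq V$'' would break down — while everything else is routine bookkeeping around the classical entourage-to-pseudometric machinery (one could alternatively invoke as a black box that a uniformity with a countable base of $G$-invariant entourages is induced by a single $G$-invariant pseudometric).
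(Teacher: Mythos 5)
Your proof is correct, and its skeleton is the same as the paper's: iterate \autoref{l:triangle} to produce a decreasing chain of $G$-invariant open covers whose entourages $\cE(\cU_n)$ satisfy the composition condition, then invoke the standard metrisation lemma for a countable chain of symmetric $G$-invariant entourages. The one place you genuinely diverge is in how you arrange that $d_x$ detects convergence to $x$. The paper does this via \autoref{l:separation}, which refines each cover so that its $G$-basis contains a \emph{unique} element $B_n$ with $x \in GB_n$ and $\{B_n\}$ a neighbourhood basis of $x$; the price is that a small $d_x$-ball around $x$ only yields $x_i, x \in g_n B_n$ for some group elements, and one must then call on \autoref{lem:Equicontinuity} (hence on plain topological equicontinuity) to upgrade ``$h_i x_i \to x$ and $h_i x \to x$'' to ``$x_i \to x$''. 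You instead apply uniform topological equicontinuity directly at each stage to force the star $E_n[x] = \bigcup\{U \in \cU_n : x \in U\}$ into a prescribed basic neighbourhood $V_n$, which makes the final implication $d_x(x_i,x) \to 0 \Rightarrow x_i \to x$ an immediate inclusion of balls, with no appeal to \autoref{l:separation} or \autoref{lem:Equicontinuity}. Your route is slightly more self-contained and arguably cleaner at the endgame; the paper's route factors the ``isolating $x$ within a $G$-basis'' step into a reusable lemma and leans on machinery it has already established. Both are complete proofs.
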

\begin{proof}
  Fix $x \in X$.
  Using lemmas \ref{l:separation} and \ref{l:triangle}, we can build a sequence of $G$-invariant coverings $\cU_n$ of $X$ with $G$-basis $\cB_n$ with the following properties:
  \begin{itemize}
  \item For each $n$ there exists a unique $B_n \in \cB_n$ such that $x \in GB_n$, and $\{B_n\}_n$ forms a basis of neighbourhoods of $x$.
  \item For all $n$, if $(y,z),(z,t) \in \cE_{n+1}$ then $(y,t) \in \cE_n$, where $\cE_n = \cE(\cU_n)$.
  \end{itemize}
  As in the proof of \autoref{prp:Minimal}, the family of entourages $\cE_n$ gives rise to a uniformity which is moreover metrisable by $G$-invariant pseudo-metric $d_x$.
  Since all entourages are open, $d_x$ is continuous.

  Assuming that $d_x(x_i,x) \rightarrow 0$, for all $n$ there must exist some $U_n \in \cU_n$ such that $x_i,x \in U_n$ for all $i$ large enough, and $U_n$ must be of the form $g_nB_n$.
  It follows that there exists a sequence $h_i$ with $h_i x_i \rightarrow x$ and $h_i x \rightarrow x$, so $x_i \rightarrow x$ by \autoref{lem:Equicontinuity}.
\end{proof}

\begin{thm}
  \label{thm:Isometrisable}
  Let $X$ be a second-countable metrisable space, and let $G$ act on $X$ by homeomorphisms.
  Then $G \curvearrowright X$ is isometrisable if and only if it is uniformly topologically equicontinuous.
\end{thm}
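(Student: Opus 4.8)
For the forward implication, note that if $d$ is a $G$-invariant compatible metric on $X$, and $V\ni x$ is open, one may choose $\varepsilon>0$ with $B_d(x,2\varepsilon)\subseteq V$ and set $U=B_d(x,\varepsilon)$; for any $y\in X$ put $W_y=B_d(y,\varepsilon/2)$. Then $\diam_d(W_y)\le\varepsilon$, and since every $g\in G$ is a $d$-isometry, $\diam_d(gW_y)\le\varepsilon$ as well, so if $gW_y$ meets $U=B_d(x,\varepsilon)$ then $gW_y\subseteq B_d(x,2\varepsilon)\subseteq V$. Thus $U$ witnesses uniform topological equicontinuity for $x,V$.

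For the converse, assume $G\curvearrowright X$ is uniformly topologically equicontinuous. The plan is to produce a uniformity $\mathcal U$ on $X$ that is compatible with the topology of $X$, has a countable base, and admits a base of $G$-invariant entourages. Granting this, the conclusion is general uniform-space theory: a uniformity with a countable base is pseudometrised by the usual chain construction applied to a suitable decreasing sequence of basic entourages, and if those entourages are $G$-invariant the resulting pseudometric is $G$-invariant; moreover a compatible uniformity on the Hausdorff space $X$ is separated, so this pseudometric is in fact a metric, and it exhibits $G\curvearrowright X$ as isometrisable.

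To produce $\mathcal U$ I would begin from the pseudometrics of \autoref{l:pseudometric}: for every $x\in X$ there is a continuous $G$-invariant pseudometric $d_x$ with $d_x(x_i,x)\to0$ iff $x_i\to x$. The uniformity $\mathcal U_0$ generated by the whole family $\{d_x:x\in X\}$ already has two of the three desired properties. It has a base of $G$-invariant entourages, since each set $\{(u,v):d_x(u,v)<\varepsilon\}$ is invariant under the diagonal $G$-action. And it is compatible: each $d_x$ is continuous, so the $d_x$-balls are open in $X$ and the uniform topology is coarser than that of $X$; conversely, if $V\ni x$ is open then by \autoref{l:pseudometric} every sequence with $d_x(\cdot,x)\to0$ converges to $x$ and so lies eventually in $V$, whence some $d_x$-ball about $x$ is contained in $V$, and the uniform topology is finer. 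What is missing is a countable base, and it is exactly here that second countability must enter, by cutting the family $\{d_x:x\in X\}$ down to a well-chosen countable subfamily $\{d_{x_k}\}$ \emph{without losing compatibility}; combining the $d_{x_k}$ into a single $G$-invariant pseudometric $d=\sum_k2^{-k}\min(d_{x_k},1)$ then completes the construction.

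The delicate point — which I expect to be the main obstacle — is precisely this choice of countable family. It is not enough to take $\{x_k\}$ a dense sequence: a continuous pseudometric may detect convergence to every point of a dense set yet still fail to be compatible (gluing two far-apart points spreads to a neighbourhood of each of them, but a single pseudometric can remain non-compatible at the glued points themselves). The right way to use second countability seems to go through the quotient: by the proposition preceding this section $X\sslash G$ is metrisable, hence second countable and paracompact. One can then pull a compatible metric on $X\sslash G$ back along the open map $\pi\colon X\to X\sslash G$ to obtain a continuous $G$-invariant pseudometric separating distinct $\sim$-classes, adjoin the pseudometrics $d_{x_k}$ for a countable set $\{x_k\}$ whose $G$-orbits are dense in $X$ (so that, by $G$-invariance, the $d_{x_k}$ detect convergence to a dense set of points of $X$), and then use local finiteness — in the spirit of \autoref{l:locfin} — to upgrade this ``dense'' detection to detection at every point. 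Verifying that the resulting countable family generates a compatible uniformity is the technical heart of the argument, and is exactly the place where second countability does the work done by compactness in the locally compact case of Marjanović and Kiang.
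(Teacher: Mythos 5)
Your forward direction is correct and is exactly what the paper dismisses as ``one direction is clear''. Your converse also begins the same way as the paper's proof: take the family $(d_x)_{x\in X}$ from \autoref{l:pseudometric}, cut it down to a countable subfamily, and sum that subfamily into a single $G$-invariant metric. Your observation that a bare dense sequence of centres is not obviously sufficient is also fair. But the argument is not complete: you explicitly defer what you call ``the technical heart'' --- producing a countable subfamily that still generates the topology --- to an unexecuted programme involving $X\sslash G$, a pulled-back metric, and local finiteness. As written that is a genuine gap, and the detour you sketch is much heavier than what is actually needed.

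The missing step has a short resolution, which is the one the paper uses, and the reason it is short is that you are demanding more than is required: you do not need the countable subfamily to ``detect convergence at every point'' of $X$, only that every member of a fixed countable basis of $X$ be a union of balls of pseudometrics from the subfamily. Concretely, given open $U$ and $a\in U$, \autoref{l:pseudometric} gives $\varepsilon_a>0$ with $\{y: d_a(a,y)<\varepsilon_a\}\subseteq U$; these sets are open (each $d_a$ is continuous) and cover $U$, so by the Lindel\"of property countably many of them already cover $U$, i.e.\ $U=\bigcup_{a\in A}\{y: d_a(a,y)<\varepsilon_a\}$ for some countable $A\subseteq U$. Running $U$ over a countable basis produces a countable family $\{d_{a_k}\}$ whose balls form a basis of the topology of $X$. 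The sum $d=\sum_k 2^{-k}\min(d_{a_k},1)$ is then a continuous $G$-invariant pseudometric; each basic open set is $d$-open (around any $x$ in one of the covering balls $\{y: d_{a_k}(a_k,y)<\varepsilon_{a_k}\}$, a small $d$-ball stays inside that same $d_{a_k}$-ball by the triangle inequality), so $d$ induces the topology of $X$, and it separates points because $X$ is Hausdorff. The essential point your sketch misses is that the centres and the radii are chosen \emph{together} via Lindel\"of --- which is exactly the information a dense sequence alone does not carry --- and once this is done no passage through the quotient is needed.
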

\begin{proof}
  One direction is clear, so we prove the other.
  Applying \autoref{l:pseudometric}, we obtain a family of continuous $G$-invariant pseudometrics $(d_x)_{x \in X}$.
  Since $d_x(x,x_i) \rightarrow 0$ if and only if $x_i \rightarrow x$, for any open subset $U$ of $X$ and any $x \in U$, there exists $\varepsilon >0$ such that $d_x(x,y) < \varepsilon \Rightarrow y \in U$. By the Lindel\"off property, we obtain that for any open $U \subseteq X$ there exists a countable subset $A \subseteq U$ and a family $(\varepsilon_a)_{a \in A}$ such that
  \[
  U= \bigcup_{a \in A} \{x : d_a (x,a) < \varepsilon_a \}.
  \]
  Applying this to a countable basis for the topology of $X$, we obtain a countable family of $G$-invariant pseudometrics which generate the topology, and this countable family may be subsumed into a single $G$-invariant metric.
\end{proof}

\section{Complete and incomplete metrics}

In this section we assume that $G \curvearrowright X$ is isometrisable, and $X$ admits a compatible complete metric. A natural question is then: must $G \curvearrowright X$ admit a complete invariant metric?

The following observation is immediate.

\begin{prp}
  \label{prp:MinimalComplete}
  Assume that $G \curvearrowright X$ is minimal.
  Then there exists a complete compatible $G$-invariant distance on $X$ if and only if all compatible $G$-invariant distances are complete.
\end{prp}
\begin{proof}
  Assume that a compatible complete $G$-invariant distance exists, fix $x \in X$, and let $d$ be another $G$-invariant compatible distance.
  Then a sequence $(g_i x)$ is $d$-Cauchy if and only if for any neighborhood $V$ of $x$ there exists $N$ such that $g_i^{-1} g_j x \in V$ for all $i, j \ge N$. This property does not depend on $d$ but only on the topology of $X$, so any $d$-Cauchy sequence of the form $(g_i x)$ must converge.

  Given any $d$-Cauchy sequence $(x_i)$, the minimality of $G \curvearrowright X$ enables us to find $g_i$ such that $d(g_i x, x_i) < 2^{-i}$ for all $i$. Then $(g_i x)$ is also $d$-Cauchy, hence convergent, and so is $(x_i)$.
\end{proof}
The fact above is well-known in the particular case when $G$ is a Polish group acting by left translation on itself (and the proof is the same). All Polish groups admit left-invariant compatible metrics, but not all of them admit such metrics which are also complete (and, if one such metric is complete, all of them are). For instance, the group $S_\infty$ of all permutations of the integers, endowed with its usual Polish topology, does not admit a compatible left-invariant complete metric.

The following simple example was suggested by C. Rosendal.
\begin{exm}
  There exists a Polish space $X$ and a $\bZ$-action on $X$ which is isometrisable but which admits no complete invariant distance.
\end{exm}
\begin{proof}
  Let $r$ be an irrational rotation of the unit circle $\bS$, and let $X = \bS \setminus \{r^i(1) : i \in \bZ\}$. Then $X$ is a $G_\delta$ subset of $\bS$, hence Polish, and the restriction of $r$ to $X$ generates an isometrisable $\bZ$-action; the metric on $X$ induced from the usual metric on $\bS$ is both invariant and not complete, so there cannot exist an invariant complete metric on $X$.
\end{proof}

As it turns out, the minimal case contains essentially all the obstructions to the existence of a complete invariant metric.

\begin{thm}
  Assume that $X$ is completely metrisable and the action $G \curvearrowright X$ is isometrisable.
  Then there exists a compatible complete $G$-invariant distance on $X$ if and only if there exists such a distance on the closure of each $G$-orbit.
\end{thm}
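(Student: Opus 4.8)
The plan is to combine completeness of $d$ along the fibres of the quotient map $\pi\colon X \to X\sslash G$ with completeness of the quotient space itself. One implication is free: if $d$ is a compatible complete $G$-invariant distance on $X$ and $x \in X$, then $[x] = \overline{Gx}$ is closed and $G$-invariant, so $d\rest_{[x]}$ is a compatible complete $G$-invariant distance on $[x]$.

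For the converse, fix a compatible $G$-invariant distance $d$ on $X$ (which exists by \autoref{thm:Isometrisable}). The first point is that $d$ is automatically complete on each fibre of $\pi$: for every $x$ the restricted action $G \curvearrowright [x]$ is minimal, since by \autoref{lem:Equivalence} every $G$-orbit contained in $[x]$ is dense in $[x]$; as $[x]$ carries \emph{some} compatible complete $G$-invariant distance by hypothesis, \autoref{prp:MinimalComplete} forces $d\rest_{[x]}$ itself to be complete. The second point is that $X\sslash G$ is completely metrisable: being isometrisable the action is uniformly topologically equicontinuous (\autoref{thm:Isometrisable}), hence topologically equicontinuous, so $X\sslash G$ is metrisable (as shown above) and $\pi$ is an open continuous surjection (\autoref{lem:OpenQuotient}); since $X$ is completely metrisable, the classical fact that a metrisable space which is an open continuous image of a completely metrisable space is itself completely metrisable (see \cite{Engelking:GeneralTopology}) yields a compatible complete distance $\rho$ on $X\sslash G$. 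Now set
\[
d'(x,y) = d(x,y) + \rho\bigl(\pi(x),\pi(y)\bigr).
\]
This is again a compatible $G$-invariant distance on $X$: it is $G$-invariant since $d$ is and $\pi$ is constant on $G$-orbits, it dominates $d$ so its topology contains that of $X$, and it is continuous (as $d$, $\rho$ and $\pi$ are), so the two topologies agree.

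It remains to show $d'$ is complete, which is the crux. Let $(x_i)$ be $d'$-Cauchy. Then $(\pi x_i)$ is $\rho$-Cauchy, hence converges to some $q = \pi(c)$; set $\Phi = \pi^{-1}(q) = [c]$. I claim $d(x_i,\Phi) \to 0$, where $d(x,\Phi) = \inf_{z \in \Phi} d(x,z)$. Indeed, $G$-invariance of $d$ and of $\Phi$ makes the $1$-Lipschitz function $x \mapsto d(x,\Phi)$ constant on each $G$-orbit, hence, by continuity, constant on each orbit closure $[x]$; it therefore factors through $\pi$, and the induced function on $X\sslash G$ is continuous because $\pi$, being open, is a quotient map, and it vanishes at $q$ because $c \in \Phi$; since $\pi x_i \to q$, this gives $d(x_i,\Phi) \to 0$. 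Choosing $z_i \in \Phi$ with $d(x_i,z_i) \to 0$, the sequence $(z_i)$ is $d$-Cauchy inside $\Phi$ (because $(x_i)$ is $d$-Cauchy), so by the fibrewise completeness established above it $d$-converges to some $z \in \Phi$; then $d(x_i,z) \to 0$ and, as $\pi z = q$, also $\rho(\pi x_i,\pi z) \to 0$, so $x_i \to z$ in $d'$. The only non-routine step is the descent argument for $x \mapsto d(x,\Phi)$, where openness of $\pi$ is essential; everything else is routine manipulation of triangle inequalities.
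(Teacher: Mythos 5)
Your proof is correct and follows essentially the same route as the paper: fibrewise completeness via \autoref{prp:MinimalComplete}, a complete metric $\rho$ on $X \sslash G$ via openness of $\pi$, and the sum metric $d' = d + \rho\circ(\pi\times\pi)$. The only (harmless) variation is in the final Cauchy argument, where you descend the function $x \mapsto d(x,\Phi)$ to the quotient instead of directly extracting group elements $g_n$ with $g_n x_n \to x$ from the openness of $\pi$ as the paper does.
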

\begin{proof}
  The condition is clearly necessary.
  Now assume that there exists a compatible complete metric on the closure of each $G$-orbit, and let $d$ be a $G$-invariant metric on $X$.
  By \autoref{prp:MinimalComplete} the restriction of $d$ to each $[x] = \overline{G x}$ is complete.
  Also, since the projection map $X \to X \sslash G$ is open and open maps with range in a metrisable space preserve complete metrisability (see for instance Exercise 5.5.8(d) p.341 in \cite{Engelking:GeneralTopology}), there exists a complete distance $\rho$ on $X \sslash G$.
  Consider the new metric $d'$ defined by
  $$ d'(x,y)= d(x,y) + \rho([x],[y]). $$
  Clearly $d'$ is $G$-invariant and compatible with the topology of $X$.
  Assume now that $(x_n)$ is $d'$-Cauchy.
  Since $\rho$ is complete, $[x_n]$ must converge to $[x]$ for some $x \in X$, i.e., there exists a sequence $(g_n)$ such that $g_n x_n \rightarrow x$.
  By invariance, $d(x_n,g_n^{-1}x) \to 0$, so $(g_n^{-1}x)$ is a $d$-Cauchy sequence in $[x]$ which must converge to some $y$.
  Therefore $x_n \rightarrow y$ as well, concluding the proof.
\end{proof}

\providecommand{\bysame}{\leavevmode\hbox to3em{\hrulefill}\thinspace}

\end{document}